% Re submitted to JOTP on 18/06/2012
\documentclass[11pt]{amsart}
\usepackage{mypackage}
\usepackage{myformat}
\usepackage{mathtools}

%%%%%%%%%%%%%%%%%%%%%%%%%%%%%

\newcommand{\fseq}[2]{\langle #1_1,\ldots,#1_{#2}\rangle}
\newcommand{\eqfs}[2]{[#1_1,\ldots,#1_{#2}]}
\newcommand{\bw}[3]{ \bigwedge_{#1\in #2}#3_{#1} }

\newcommand{\Hom}[2]{\mathrm{Hom}(#1,#2)}
%%%%%%%%%%%%%%%%%%%%%%%%%%%%%

\begin{document} 
\title[Semilattices]
{Semilattices, Canonical Embeddings and Representing Measures}
\mybic 
\date\today 
\subjclass[2000]{Primary: 28A05, 60G05.} 
\keywords{Lattice, Modular set functions, 
Extension of measures, Non additive integral.}

\begin{abstract} 
We provide conditions under which a modular 
function defined on a semilattice $X$ and with
values in a commutative group is homomorphic to
a modular function on a lattice $L$ for any 
embedding $X\hookrightarrow L$.
\end{abstract}

\maketitle

\ToWhom{
This paper is dedicated to the memory of 
Filippo Arpesani.
}

\section{Introduction and Notation} 
In this paper we study functions  that map a 
semilattice, i.e. an idempotent, commutative 
semigroup, into a commutative group. Similar
objects abund in measure theory where the
main example is that of a vector valued set 
function defined on a family of sets with 
minimal properties such as the family of 
convex sets in a real vector space or the 
family of regular open sets in a topological 
space. In analysis a general example of a
semilattice is the family of real valued 
functions defined on some topological space
and with compact support. The main questions 
we address in this paper are 
\tiref i 
whether a semilattice may be embedded into a 
proper lattice and 
\tiref{ii} 
under what conditions a function defined on a
semilattice and with values in a group admits 
an extension to such larger domain that preserves 
reasonable properties. In particular our interest 
is for modular functions a property near to 
additivity arising naturally in the study of
capacities initiated by Choquet \cite{choquet}.

Our results are based on the construction of an
embedding of a semilattice into a special space
of sequences on which a lattice structure is easily
defined. This embedding is canonical in the sense 
that every other embedding into a lattice is isomorphic
to it. Moreover every function defined on a semilattice
may be homomorphically represented as a function
on its canonical space. Based on this construction
we are able to associate to each function a 
 corresponding set function, its {\it distribution},
and exploit some measure extension techniques
to eventually obtain a positive answer to question 
\tiref{ii}.

The extension of an additive function defined on some 
family of sets to a larger domain is a time honoured problem 
that has received many important contributions,
among which one should mention Horn and Tarski 
\cite{horn_tarski} and Pettis \cite{pettis}. However, 
their results are specific to set functions and, at least
in the case of Horn and Tarski, exploit outer measure
techniques which are simply not applicable when
the function considered takes value in a group 
without an order structure.

On the other hand, functions having a general 
semilattice as their domain have appeared in a 
limited number of noteworthy papers by authors
such as Leader \cite{leader}, Newman \cite{newman} 
and Kist and Maserick \cite{kist_maserick}. In these
papers the focus is on real valued functions and
the aim is to extend to this context the familiar
notions of monotonicity and of variation. 

After some combinatorial preliminaries in section 
\ref{sec Mobius}, we introduce in section \ref{sec modular} 
modular set functions and obtain the canonical 
embedding of a semilattice in section \ref{sec canonical}. 
Eventually, in section \ref{sec extension} we prove our 
result on the extension of a modular function defined on 
a semilattice.

We find convenient the following general notation. 
When $S$ is a set, $\abs S$ is its cardinality and 
mapping a set $X$ into another set $Y$ is
written as $\Fun{X,Y}$ (or simply $\Fun X$ when 
$Y=\R$) and the image of $X$ in $Y$ under some 
$f\in\Fun{X,Y  }$ is denoted by $f[X]$. If $N\in\N$ we 
write $[N]=\{1,\ldots,N\}$. 

\section{M\"obius preliminaries.}
\label{sec Mobius}

Let $\mathcal X$ be the collection of all finite subsets 
of a given set $X$, ordered by inclusion and write
\begin{equation}
\label{nu}
\nu(b)
	=
(-1)^{1+\abs b}
\qquad
b\in\mathcal X.
\end{equation}
If $j\in\N$, we write $\nu(j)=\nu([j])$. If $a\le b$, 
one easily concludes that $\nu(b)=\mu(a,b)\nu(a)$ 
where $\mu$ is the M\"{o}bius function on $\mathcal X$, 
see \cite[Corollary, p. 345]{rota}. We thus deduce 
from \cite[Propositions 1 and 2, p. 344]{rota} 
that $\nu$ is the unique set function on $\mathcal X$ 
satisfying: 
\begin{subequations}
\begin{equation}
\label{Mobius function}
\nu(\emp)
	=
-1
\qtext{and}
\sum_{a\le t\le b}\nu(t)
	=
0
\qquad a,b\in\mathcal X,\ a\ne b
\end{equation}
\begin{equation}
\label{Mobius inversion}
f(b)
	=
\nu(b)\sum_{\{t:a\le t\le b\}}F(a,t)\nu(t)
\qtext{and}
f(a)
	=
\nu(a)\sum_{\{s:a\le s\le b\}}F(s,b)\nu(s)
\qquad
f:\mathcal X\to\R
\end{equation}
\end{subequations}
where $F(a,b)=\sum_{a\le y\le b}f(y)$. 

A useful application of the M\"{o}bius function
is the following Lemma that will be very useful
in the sequel.

\begin{lemma}
\label{lemma combinatorial} 
Let $A$ and $B$ be finite sets and denote by $K_j(A,B)$ 
the number of subsets of $A\times B$ with cardinality 
$j$ and in which each $a\in A$ and $b\in B$ appear 
at least once. Then,
\begin{equation} 
\label{K}
\sum_{0<j\le\abs{A\times B}}\nu(j)K_j(A,B)
	=
\nu(A)\nu(B).
\end{equation}
\end{lemma}

\begin{proof} 
Forming subsets of $A\times B$ of cardinality 
$j>0$ by using either \tiref{i} {\it at most} or 
\tiref{ii} {\it exactly} $n$ elements of $A$ and 
$m$ elements of $B$ may be achieved in a 
number of ways, (\textit{i}) $q_j(n,m)$ or 
(\textit{ii}) $k_j(n,m)$ respectively. Of 
course, $q_j(n,m)=k_j(n,m)=0$ unless 
$j\le nm$. Moreover, by classical 
combinatorial formulas we obtain
\begin{align}
\label{q}
\sum_{\substack{0<t\le n\\0<s\le m}}k_j(t,s)
	=
q_j(n,m)
	=
{nm\choose j}{\abs A\choose n}{\abs B\choose m}
\end{align}
which can be inverted, via \eqref{Mobius inversion}, 
to give%
\footnote{
We set as usual ${n\choose k}=0$ if $k>n$.
}
\begin{align*}
k_j(\abs A,\abs B)
	&=
\nu(A)\nu(B)
\sum_{\substack{
0<n\le\abs A
\\
0<m\le\abs B}}
\nu(n)\nu(m)q_j(n,m)
\\
	&=
\nu(A)\nu(B)\sum_{\substack{
0<n\le\abs A
\\
0<m\le\abs B}}
(-1)^{n+m}{nm\choose j}{\abs A\choose n}{\abs B\choose m}.
\end{align*}
Since 
$K_j(A,B)=k_j(\abs A,\abs B)$ 
and 
$\sum_{0<j\le N}\nu(j){N\choose j}=1$,
we conclude
\begin{align*}
\sum_{0<j\le\abs{A\times B}}\nu(j)K_j(A,B)
	&=
\nu(A)\nu(B)\sum_{\substack{0<n\le\abs A\\0<m\le\abs B}}
(-1)^{n+m}{\abs A\choose n}{\abs B\choose m}
\sum_{0<j\le nm}\nu(j){nm\choose j}
	=
\nu(A)\nu(B).
\end{align*}
\end{proof}

\section{ Semilattices and modular functions}
\label{sec modular}

Following Birkhoff \cite{birkhoff}, we define a 
semilattice as a commutative, idempotent 
semigroup $(X,\wedge)$ endowed with the 
partial order defined by letting $x\le y$ whenever 
$x=x\wedge y$. Then, $x\wedge y$ coincides 
with the greatest lower bound of the set 
$\{x,y\}$, so that $X$ is a semilattice in the
order theoretic sense. Conversely, a semilattice 
with respect to some order relation is a 
commutative, idempotent semigroup 
relatively to the semilattice operation. 
Thus a semilattice may be equivalently 
defined as a partially ordered set in which 
every pair admits a greatest lower bound.

\begin{example}
If for each $\alpha$ in a non empty index set
$\mathfrak A$ the pair $(X_\alpha,\wedge_\alpha)$
is a semilattice, then the product semilattice
$\bigtimes_{\alpha\in\mathfrak A}(X_\alpha,\wedge_\alpha)$
is defined as 
$(\bigtimes_{\alpha\in\mathfrak A}X_\alpha,\wedge)$
where $\wedge$ is defined by letting
\begin{equation}
(x_\alpha)_{\alpha\in\mathfrak A}
\wedge
(x'_\alpha)_{\alpha\in\mathfrak A}
	=
(x_\alpha \wedge_\alpha x'_\alpha)_{\alpha\in\mathfrak A}.
\end{equation}
A special case of this construction is
$(X,\cap)\times(X',\cup)$ with $X$ a $\cap$ closed
family of subsets of $\Omega$ and $X'$ a
$\cup$ closed family of subsets of $\Omega'$.
\end{example}

A mapping $\iota:X\to X'$ between two semilattices 
is a (semilattice) homomorphism -- in symbols
$\iota\in\Hom X{X'}$ -- if 
$\iota(x\wedge y)
	=
\iota(x)\wedge\iota(y)$ 
for all $x,y\in X$. 
A semilattice homomorphism which is injective 
is referred to as a semilattice embedding and it
establishes an isomorphism onto its range.
Homomorphisms preserve order. If $X$ is a 
semilattice, the least upper bound of a set 
$a=\{x_1,\ldots,x_N\}$ may occasionally exist 
in $X$, in which case we denote it by 
$\bigvee_{n=1}^Nx_n$ or by $\vee a$. It is easily seen that if
$\iota\in\Hom{X}{X'}$ is a semilattice {\it isomorphism}, 
then $\bigvee_{n=1}^Nx_n$ exists in $X$ if and 
only if $\bigvee_{n=1}^N\iota(x_n)$ exists in 
$X'$ and 
$\bigvee_{n=1}^N\iota(x_n)
	=
\iota\big(\bigvee_{n=1}^Nx_n\big)$.
However, if $\iota$ is just an embedding the 
existence of $\bigvee_{n=1}^Nx_n$ in $X$ does 
not guarantee that of $\bigvee_{n=1}^N\iota(x_n)$ 
in $X'$ and, in case both exist, one can only
establish the inequality 
$
\iota\big(\bigvee_{n=1}^Nx_n\big)
	\ge
\bigvee_{n=1}^N\iota(x_n)
$%
\footnote{
In the special case in which $X$ is a $\wedge$ 
closed subset of another semilattice $X'$, the 
supremum of the set $\{x,y\}\subset X$ 
may be computed relatively to $X$ or to $X'$. 
To avoid ambiguity we shall always write the
former as $x\vee y$ and the latter as
$\iota(x)\vee\iota(y)$, with $\iota$ the
inclusion map $\iota:X\to X'$.
}.

A distinguished property of functions defined on
semilattices is modularity relatively to some Abelian
group $G$ written additively, as customary.

\begin{definition}
Let $X$ be a semilattice and $G$ an Abelian group. 
Then $f\in\Fun{X,G}$ is said to be modular -- in symbols
$f\in\Fun[m]{X,G}$ -- whenever
\begin{align}
\label{modular}
f\Big(\bigvee_{1\le n\le N}x_n\Big)
	&=
\sum_{\emp\ne b\le[N]}\nu(b)f\Big(\bw n b x\Big)
\qquad
x_1,\ldots,x_N,\bigvee_{1\le n\le N}x_n\in X.
\end{align}
\end{definition}

A useful implication of \eqref{modular} is that if 
$f$ is a modular function on a lattice $L$ and if 
$L$ is generated by some $\wedge$ closed subset 
$X\subset L$, namely if
$L
	=
\{\bigvee_{1\le n\le N}x_n:x_1,\ldots,x_N\in X\}$,
then $f$ is completely determined by its restriction
to $X$. In other words, if $X$ generates $L$ then
each $g\in\Fun{X,G}$ admits at most one modular 
extension to $L$.

The following claim is obvious given the preceding
remarks.

\begin{lemma}
\label{lemma comp}
Let $\iota\in\Hom XY$ be a semilattice isomorphism,
$G$ an Abelian group and $f\in\Fun{Y,G}$. Then
$f$ is modular if and only if so is $f\circ\iota$.
\end{lemma}

In case $X$ is a lattice, property \eqref{modular} 
takes the more familiar form
\begin{equation}
\label{2 additive}
f(x\vee y)
	=
f(x)+f(y)-f(x\wedge y)
\qquad
x,y\in X.
\end{equation}
A function satisfying \eqref{2 additive} is called a
valuation by some authors while others refer to 
\eqref{2 additive} as two-additivity. It is easy to
show that \eqref{2 additive} is in turn equivalent 
to%
\footnote{
The proof of the equivalence of \eqref{2 additive} 
with \eqref{additive} was first obtained by Horn and 
Tarski \cite[Corollary 1.3]{horn_tarski}, although in an 
apparently different context, and later rediscovered 
by Leader \cite[Proposition 10]{leader}.
}
\begin{equation}
\label{additive}
\sum_{1\le n\le N}f(x_n)
	=
\sum_{1\le k\le N}
f\Big(\bigvee_{\{b\le[N]:\abs b=k\}}\bw i b x\Big)
\qquad
x_1,\ldots,x_N\in X.
\end{equation}

The simple condition \eqref{2 additive} is not enough
to guarantee that a function is modular when $X$ is 
just a semilattice since in this setting we 
may encounter the somewhat pathological situation
in which $\bigvee_{n=1}^Nx_n\in X$ 
while $\bigvee_{n=1}^{N-1}x_n\notin X$. 
Nevertheless, \eqref{2 additive} is (necessary and) 
sufficient for modularity in some special cases of 
interest.

\begin{lemma}
Let $X$ be a semilattice and $G$ an Abelian group. 
If $X$ is such that
\begin{equation}
\label{groemer}
\bigvee_{1\le n\le N}(x_n\wedge x)\in X
\qquad
x,x_1,\ldots,x_N\in X
\end{equation}
then \eqref{modular} is equivalent to \eqref{2 additive} 
for all $x,y\in X$ with $x\vee y\in X$.
\end{lemma}

\begin{proof}
Let $N$ be the smallest integer for which there 
exists a finite family $x_1,\ldots,x_N\in X$ such that
\begin{equation}
\bigvee_{1\le n\le N}x_n\in X
\qtext{but}
f\Big(\bigvee_{1\le n\le N}x_n\Big)
\ne
\sum_{\emp\ne b\le[N]}\nu(b)f\Big(\bw n b x\Big).
\end{equation}
By \eqref{groemer}, $z=\bigvee_{1\le n\le N-1}x_n$ 
exists in $X$ so that, under \eqref{2 additive},
\begin{align*}
f\Big(\bigvee_{1\le n\le N}x_n\Big)
&=
f(z)+f(x_N)-f(z\wedge x_N)
\\&=
\sum_{\emp\ne b\le[N-1]}\nu(b)f\Big(\bw n b x\Big)
+
f(x_N)
-
\sum_{\emp\ne a\le[N-1]}\nu(a)f\Big(\bw n a x\wedge x_N\Big)
\\&=
\sum_{\emp\ne b\le[N]}\nu(b)f\Big(\bw n b x\Big),
\end{align*}
a contradiction.
\end{proof}

If $(Y,\cdot)$ is a commutative semigroup, the so is 
$\Fun{X,Y}$ upon defining
\begin{equation}
(f\cdot g)(x)
	=
f(x)\cdot g(x)
\qquad
x\in X,\ 
f,g\in\Fun{X,Y}.
\end{equation}
It makes thus sense to claim that $\Fun{X,G}$
and $\Fun{X',G}$ are (semigroup) homomorphic.

%We close this section with a result on factorization.
%If $X$ is a semilattice, $G$ an Abelian group and 
%$f\in\Fun{X,G}$ then we define the equivalence
%relation $\sim_f$ on $X$ by letting
%\begin{equation}
%\label{equivalence}
%x\sim_fx'
%\qtext{whenever}
%f(x\wedge z)
%	=
%f(x'\wedge z)
%\qquad
%z\in X.
%\end{equation}
%Clearly $x\sim_fx'$ implies $f(x)=f(x')$. It is then
%natural to define the set $X/f$ of all equivalence
%classes $x/f$ of $X$ relative to $\sim_f$. Given
%that $z\sim_fx$ and $z'\sim_fx'$ imply
%$(z\wedge z')\sim_f(x\wedge x')$ we may define
%the binary operation $\wedge$ on $X/f$ by
%letting $(x/f)\wedge(x'/f)=(x\wedge x')/f$. If 
%$X$ were a lattice the operation $\vee$ on $X/f$
%may be defined likewise. It is easily seen that with 
%these definitions in place $(X/f,\wedge)$ is 
%itself a semilattice. More interestingly:
%
%\begin{lemma}
%\label{lemma factor}
%Let $X$ be a lattice and $f\in\Fun{X,G}$. There 
%exists $\tilde f\in\Fun{X/f,G}$ such that
%\begin{equation}
%\label{quot}
%\tilde f\big(x/f\big)
%	=
%f(x)
%\qquad
%x\in X.
%\end{equation}
%If $X$ is a lattice, then $f$ is modular if and only
%if so is $\tilde f$.
%\end{lemma}

%\begin{proof}
%It is clear that \eqref{quot} implicitly provides a
%definition of $\tilde f\in\Fun{X/f,G}$. Suppose that
%$\tilde f$ is modular and let $x_1,\ldots,x_N\in X$
%be such that $\bigvee_{1\le n\le N}x_n\in X$.
%\end{proof}

\section{The canonical embedding of a semilattice}
\label{sec canonical}

Throughout the rest of the paper $X$ will be a fixed 
semilattice and $G$ a given Abelian group.

For two finite sequences of elements in $X$ we write 
\begin{equation}
\label{seq order}
\fseq xN
	\le
\fseq zM
\qtext{whenever}
\big(\forall n\in[N]\big)\big(\exists m\in[M]\big):
x_n\le z_m.
\end{equation}
Of course, if $\emp$ denotes the empty sequence,
then trivially $\emp\le\fseq xN$ for all finite 
sequence $\fseq xN$ in $X$. The equivalence 
relation implicit in this definition induces the 
quotient space $\Xi$ of all finite sequences from 
$X$. The obvious extension of $\ge$ to $\Xi$ defines 
a partial order. The symbol $\eqfs xN$ denotes the 
equivalence class containing the finite sequence
$\fseq x N$. A special representative of the class 
$\xi\in\Xi$, indicated by $\xi^*$, is the one having
a minimal number of elements. Eventually if 
$\xi=\eqfs xN$ and $\zeta=\eqfs zM$, then we define
the binary operations
\begin{equation}
\label{Xi lattice}
\xi\wedge\zeta
	=
[x_1\wedge z_1,\ldots,x_n\wedge z_m,\ldots, x_N\wedge z_M]
\qtext{and}
\xi\vee\zeta
	=
[x_1,\ldots,x_N,z_1,\ldots,z_M]
\end{equation}
which makes $\Xi$ into a (distributive) lattice.

The following, simple result illustrates the importance
of this construction. 
%We find it convenient, when
%$x_1,\ldots,x_N\in X$ and $b\subset\N$, to write
%$x(b)=\{x_n:n\in b\cap[N]\}$.

\begin{theorem}
\label{th canonical}
$X$ embeds into the lattice $\Xi$ of equivalence 
classes of finite sequences from $X$. $\Fun{X,G}$ 
is group isomorphic to
$\{T\in\Fun[m]{\Xi,G}:T(\emp)=0\}$ 
via the identity
\begin{equation}
\label{canonical}
T(\xi)
	=
\sum_{\emp\ne b\le[N]}\nu(b)
f\Big(\bigwedge_{n\in b} x_n\Big)
\qquad
\xi=\eqfs xN\in\Xi.
\end{equation}
Moreover, 
(i)
$f$ is modular if and only if 
$T(\eqfs xN)=T([\bigvee_{1\le n\le N}x_n])$
whenever $\bigvee_{1\le n\le N}x_n\in X$,
(ii)
if $G$ is partially ordered then $T$ is monotonic 
if and only if
\begin{equation}
\label{monotonic}
T(\xi)
	\le
f(x)
\qtext{whenever}
\xi\le[x].
\end{equation}
\end{theorem}

\begin{proof}
The mapping $x\to[x]$ of $X$ into 
$\Xi_0
	=
\{[x]:x\in X\}
\subset
\Xi$ 
is clearly a semilattice isomorphism and induces a 
bijection between $\Fun{X,G}$ and $\Fun{\Xi_0,G}$ 
in an obvious way. To prove that the identity 
\eqref{canonical} 
establishes an isomorphism, assume that 
$T\in\Fun[m]{\Xi,G}$ and that $T(\emp)=0$ 
and define $f\in\Fun{X,G}$ via
\begin{equation}
\label{Ttof}
f(x)
	=
T([x])
\qquad
x\in X.
\end{equation}
If $\xi=\eqfs x N$, then $\xi=\bigvee_{n=1}^N[x_n]$
and, since $T$ is modular,
\begin{align*}
T(\xi)
	=
\sum_{\emp\ne b\le[N]}\nu(b)
T\Big(\bigwedge_{n\in b}[x_n]\Big)
	=
\sum_{\emp\ne b\le[N]}\nu(b)
T\Big(\big[\bw nbx\big]\Big)
	=
\sum_{\emp\ne b\le[N]}\nu(b)
f\Big(\bw nbx\Big).
\end{align*}
This shows that $f$ as defined in \eqref{Ttof}
satisfies \eqref{canonical}; moreover, 
since $\Xi_0$ generates $\Xi$, the map 
$T\to f$ is injective. To show that it is onto,
let $f\in\Fun{X,G}$, fix $x_1,\ldots,x_N\in X$
and designate the right hand side of \eqref{canonical} 
by $F(\fseq xN)$. In order to show that
$F(\fseq xN)=F(\fseq zM)$ when
$\eqfs xN=\eqfs zM$ we start 
noting that, since $G$ is Abelian, $F$ is 
invariant with respect to a permutation of 
the elements of the sequence. Second,
suppose that $x_N\le x_{N-1}$. Then,
\begin{align*}
F(\fseq xN)
&= 
F(\fseq x{N-1})
+
f(x_N)
-
F(\fseq{x_N\wedge x}{N-1})
\\&=
F(\fseq x{N-1})
+
f(x_N)
-
F(\fseq{x_N\wedge x}{N-2})
\\&\quad-f(x_{N-1}\wedge x_N)
+
F(\fseq{x_N\wedge x_{N-1}\wedge x}{N-2})
\\&=
F(\fseq x{N-1}).
\end{align*}
It follows that the value of $F(\fseq xN)$ is
unaffected if we drop the dominated elements
from the sequence $\fseq xN$. Therefore,
\begin{equation}
F(\fseq xN)
	=
F(\xi^*),
\qquad
\xi=\eqfs xN\in\Xi.
\end{equation}
This shows that indeed \eqref{canonical} defines
an element $T\in\Fun{\Xi,G}$ such that
$T(\emp)=\sum\emp=0$. To prove that
$T$ is modular let $\xi=\eqfs xN$, $\zeta=\eqfs zM$ 
and write $\xi\vee\zeta$ as $\eqfs y K$.
For any $\alpha\le[N]$ and $\beta\le[M]$ 
denote by $\Gamma_j(\alpha,\beta)$ the family 
of subsets of $\alpha\times\beta$ consisting of 
exactly $j$ pairs $(n,m)$ and in which all 
$n\in\alpha$ and $m\in\beta$ appear at least 
once. The cardinality of $\Gamma_j(\alpha,\beta)$ 
was denoted by $K_j(\alpha,\beta)$ in Lemma 
\ref{lemma combinatorial}. Observe that 
$\Gamma_j(\alpha,\beta)\ne\emp$ if and
only if 
$\abs\alpha\vee\abs\beta
\le
j
\le
\abs\alpha\abs\beta$,
that
\begin{equation}
\label{cross}
\bigwedge_{(n,m)\in\gamma}(x_n\wedge z_m)
	=
\bw n\alpha x\wedge\bw m\beta z
\qquad
\gamma\in\Gamma_j(\alpha,\beta),
\abs\alpha\vee\abs\beta\le j\le\abs\alpha\abs\beta
\end{equation} 
and that 
$\nu(\abs\alpha\abs\beta)
=
-\nu(\alpha)\nu(\beta)$.

\begingroup
\allowdisplaybreaks
\begin{equation}
\begin{split}
\label{long}
T(\xi\vee\zeta)
	&=
\sum_{\substack{\alpha\le[N],\ \beta\le[M]\\\emp\ne\alpha\cup\beta}}
\nu(\abs\alpha+\abs\beta)
f\big(\bw n\alpha x\wedge\bw m\beta z\big)
\\
	&=
T(\xi)+T(\zeta)
-
\sum_{\substack{\emp\ne\alpha\le[N]\\ \emp\ne\beta\le[M]}}
\nu(\alpha)\nu(\beta)
f\big(\bw n\alpha x\wedge\bw m\beta z\big)
\\
	&=
T(\xi)+T(\zeta)
-
\sum_{\substack{\emp\ne\alpha\le[N]\\ \emp\ne\beta\le[M]}}\ 
\sum_{0<j\le\abs\alpha\abs\beta}\nu(j)K_j(\alpha,\beta)
f\big(\bigwedge_{\substack{n\in\alpha\\m\in\beta}}(x_n\wedge z_m)\big)
\qquad(\text{by }\eqref{K})
\\
	&=
T(\xi)+T(\zeta)
-
\sum_{\substack{\emp\ne\alpha\le[N]\\ \emp\ne\beta\le[M]}}\ 
\sum_{0<j\le\abs\alpha\abs\beta}
\sum_{\gamma\in\Gamma_j(\alpha,\beta)}
\nu(\gamma)f\big(\bw k\gamma y\big)
\hspace{2.2cm}(\text{by }\eqref{cross})
\\
	&=
T(\xi)
+
T(\zeta)
-
\sum_{\emp\ne\gamma\le[K]}
\nu(\gamma)f\big( \bw k\gamma y\big)
\\
	&=
T(\xi)+T(\zeta)-T(\xi\wedge\zeta).
\end{split}
\end{equation}
\endgroup
Thus, $T\in\Fun[m]{\Xi,G}$ so that the map $f\to T$
defines a bijection between the groups $\Fun{X,G}$ 
and $\{T\in\Fun[m]{\Xi,G}:T(\emp)=0\}$ which is 
clearly additive and therefore a group 
isomorphism.

By definition $f$ is modular if and only if for 
all $x_1,\ldots,x_N\in X$ with 
$\bigvee_{1\le n\le N}x_n\in X$ one has
$T([\bigvee_{1\le n\le N}x_n])
=
f(\bigvee_{1\le n\le N}x_n)
=
T(\eqfs xN)$.
If $T$ is monotonic then \eqref{monotonic} is obvious. 
Conversely, let \eqref{monotonic} hold and choose
$\xi=\eqfs xN\le\eqfs z K=\zeta$. We can then assume, with
no loss of generality that $x_1\le z_1$. Then
by modularity
\begin{equation}
\label{Tmon}
T([z_1,x_2,\ldots,x_N])
	=
T([z_1,x_1\ldots,x_N])
	=
T(\xi)+f(z_1)-T([x_1\wedge z_1,\ldots,x_N\wedge z_1])
\ge
T(\xi).
\end{equation}
Replacing iteratively, each $x_n$ with some $z_k$ 
dominating it and then adding the remaining terms
of $\zeta$ we progressively increase the value
of the left hand side of \eqref{Tmon} until we 
reach $T(\zeta)$.
\end{proof}

\begin{remark}
Notice that \eqref{canonical} may be inverted via
\eqref{Mobius inversion} to give
\begin{equation}
f\Big(\bigwedge_{n\in b}x_n\Big)
	=
\sum_{\emp<a\le b}\nu(a)T\Big(\bigvee_{j\in a}[x_j]\Big)
\qquad
x_1,\ldots,x_N\in X.
\end{equation}
\end{remark}

\begin{remark}
For the case $G=\R$ one encounters in the literature
several different conditions that turn out to be equivalent 
to \eqref{monotonic}. Leader \cite[Proposition 6, p. 413]
{leader} focuses on $f$ being positive 
definite, i.e.
\begin{equation}
\label{pos def}
\sum_{i,j}a_if(x_i\wedge x_j)a_j
	\ge
0
\qquad
a_1,\ldots,a_N\in\mathbb Z,\ 
x_1,\ldots,a_N\in X.
\end{equation}
In his fundamental study of capacities Choquet 
\cite[p. 149]{choquet} introduced the difference 
operators $\Delta_0,\Delta_1,\ldots$ by letting 
$\Delta_0f(a)=f(a)$ and%
\footnote{
In Choquet setting $f$ is a real valued set function
on a $\cup$ closed family of sets.
}
\begin{equation}
\Delta_nf(a;x_1,\ldots,x_n)
=
\Delta_{n-1}f(a;x_1,\ldots,x_{n-1})
-
\Delta_{n-1}f(a\wedge x_n;x_1,\ldots,x_{n-1})
\qquad
n\in\N
\end{equation}
and defines a capacity to be monotonic when all of
its differences are positive. This definition is the 
discrete analogue of that of complete monotonicity 
for functions. It is easily established by induction 
that $\Delta_0f(a)=T([a])$ and
\begin{equation}
\Delta_nf(a;x_1,\ldots,x_n)
=
T([a,x_1,\ldots,x_n])-T(\eqfs x n)
=
f(a)-T([a\wedge x_1,\ldots,a\wedge x_n]).
\end{equation}
Therefore, $T$ is monotonic if and only if 
$f$ is completely monotonic. 
\end{remark}

We will refer to the embedding $X\hookrightarrow\Xi$ 
as the {\it canonical} embedding of $X$,
to $\Xi$ as the canonical space of $X$ and to $T$ in 
\eqref{canonical} as the canonical representation of 
$f$. The qualification as canonical derives from the
embedding $X\hookrightarrow\Xi$ being independent
of the choice of $G$. As we have seen, several properties 
of $f$ translate into a corresponding property of $T$. 
In particular, we deduce from \tiref i that $f$ may fail 
to be modular only if the canonical embedding of $X$ 
does not commute with the $\vee$ binary operation, 
whenever well defined. More precisely, 
\begin{lemma}
\label{lemma Norberg}
The following are equivalent:
\begin{enumerate}[(a)]
\item
the canonical embedding of $X$ commutes with $\vee$,
\item
\label{merzbach}
$\bigvee_{n=1}^Nx_n\in X$ if and only if
$x_{n_0}=\bigvee_{n=1}^Nx_n$ for some
$1\le n_0\le N$,
\item 
$\Fun[m]{X,G}=\Fun{X,G}$.
\end{enumerate}
\end{lemma}

\begin{proof}
Assume that $x_1,\ldots,x_N$ and 
$\bigvee_{n=1}^Nx_n$ are elements of $X$.
Under \tiref{a}, 
$[x_1,\ldots,x_N]
=
\bigvee_{n=1}^N[x_n]
=
\big[\bigvee_{n=1}^Nx_n\big]
$.
By definition of the order on $\Xi$, this is equivalent
to $\bigvee_{n=1}^Nx_n
=
x_{n_0}$ for some $1\le n_0\le N$. 
This shows that \tiref a and \tiref b are equivalent 
properties.
If $G$ is an Abelian group, $f\in\Fun{X,G}$ and
$\bigvee_{n=1}^Nx_n\in X$, then \tiref{b} implies
$f\big(\bigvee_{n=1}^Nx_n\big)
=
T([x_1,\ldots,x_N])$ so that $f\in\Fun[m]{X,G}$.
Conversely, let $x_1,\ldots,x_N\in X$ and 
$x=\bigvee_{n=1}^Nx_n\in X$ but $x\ne x_n$ for 
all $1\le n\le N$. Fix $0\ne y\in G$ and define 
$f\in\Fun{X,G}$ implicitly by letting $f(z)=y$ if 
$x\le z$ or else $f(z)=0$. Then, for each 
$\emp\ne b\subset[N]$ we conclude 
$f(\bigwedge_{n\in b}x_n)=0$ while $f(x)=y$ 
so that $f\notin\Fun[m]{X,G}$.
\end{proof}

Condition  \eqref{merzbach} of Lemma \ref{lemma Norberg}
first (tacitly) appeared in an unpublished work of 
Norberg \cite{norberg} and is a crucial assumption in
the so called theory of set-indexed stochastic processes. 
It asserts that there exists no non trivial least upper 
bound and it is though very restrictive in some special 
cases, e.g. when $X$ is a lattice in which case it is 
equivalent to $X$ being linearly ordered.

\begin{example}
Let $\tau$ and $\sigma$ be stopping times on a filtered 
probability space such that $0<P(\sigma\le\tau)<1$. 
Consider the $\cap$ semilattice of stochastic intervals
of the form $[[0,T]]$ as defined e.g. in 
\cite[p. 49]{dellacherie}. Then
$
[[0,\tau]]\cup[[0,\sigma]]
	=
[[0,\tau\vee\sigma]]$
but $P(\tau\vee\sigma>\tau)>0$ 
and 
$P(\tau\vee\sigma>\sigma)>0$ 
so that condition \tiref{ii} of Lemma \ref{lemma Norberg}
fails. This shows that this property fails even for stochastic 
processes indexed by stopping times.
\end{example}

The canonical embedding is the model for any other
embedding of a semilattice into a lattice.

\begin{theorem}
\label{th embed}
If $L$ is a lattice then $\Hom XL$ and $\Hom\Xi L$ 
are lattice isomorphic and the corresponding elements
are related via the equation
\begin{equation}
\label{iso}
\bigvee_{1\le n\le N}h(x_n)
	=
\iota([x_1,\ldots,x_N])
\qquad
x_1,\ldots,x_N\in X.
\end{equation}
\end{theorem}

\begin{proof}
If $\iota\in\Hom\Xi L$ define $h=\iota\circ\kappa$
with $\kappa$ the canonical map of $X$. Then,
\eqref{iso} follows from the fact that $\iota$ is a 
lattice homomorphism so that
\begin{align*}
\bigvee_{1\le n\le N}h(x_n)
=
\bigvee_{1\le n\le N}\iota([x_n])
=
\iota\Big(\bigvee_{1\le n\le N}[x_n]\Big)
=
\iota(\eqfs xN).
\end{align*}
Moreover, given that $\Xi$ is generated by $\kappa[X]$, 
the map $\iota\to h$ of $\Hom\Xi L$ into $\Hom XL$ is 
injective. On the other hand, if $h\in\Hom XL$ and 
$\xi=\eqfs xN$ and $\zeta=\eqfs zK$ are elements 
of $\Xi$ we deduce from the definition of order on 
$\Xi$ that
\begin{equation}
\xi
	\le
\zeta
\qtext{implies}
\bigvee_{1\le n\le N}h(x_n)
	\le
\bigvee_{1\le k\le K}h(z_k).
\end{equation}
We may thus define the map $\iota:\Xi\to L$ 
by letting
\begin{equation}
\label{chi}
\iota(\eqfs x N)
	=
\bigvee_{1\le n\le N}h(x_n)
\qquad
x_1,\ldots,x_N\in X.
\end{equation}
Moreover, if $\xi,\zeta\in\Xi$ are as above, then
\begin{align*}
\iota(\xi\vee\zeta)
	&=
\iota([x_1,\ldots,x_N,z_1,\ldots,z_K])
	=
\bigvee_{\substack{1\le n\le N\\1\le j\le K}}
h(x_n)\vee h(z_j)
	=
\iota(\eqfs xN)\vee\iota(\eqfs z K)
\end{align*}
as well as (using 
$h(x_n\wedge z_j)
	=
h(x_n)\wedge h(z_j)$)
\begin{align*}
\iota(\xi\wedge\zeta)
	&=
\iota([x_1\wedge z_1,\ldots,x_n\wedge z_k,\ldots,x_N\wedge z_K])
	=
\bigvee_{\substack{1\le n\le N\\1\le j\le K}}
h(x_n)\wedge h(z_j)
	=
\iota(\xi)\wedge\iota(\zeta).
\end{align*}
Thus $\iota\in\Hom\Xi L$ and the map $\Hom\Xi L\to\Hom XL$ 
is onto as well. If $h_j$ and $\iota_j$ are in correspondence 
via \eqref{iso} for $j=1,2$, then
\begin{align*}
(h_1\wedge h_2)(x)
=
(\iota_1\circ\kappa)(x)\wedge(\iota_2\circ\kappa)(x)
=
\iota_1(\kappa(x))\wedge\iota_2(\kappa(x))
=
(\iota_1\wedge\iota_2)(\kappa(x))
=
\big((\iota_1\wedge\iota_2)\circ\kappa\big)(x)
\end{align*}
for all $x\in X$, and similarly for $\vee$. This proves
that the bijection between $\Hom XL$ and $\Hom\Xi L$ 
is a lattice isomorphism.
\end{proof}

The next, crucial result justifies interest in the
modular property.

\begin{theorem}
\label{th ext}
Let $X$ be a $\wedge$ closed subset of a lattice 
$L$, $\iota:X\hookrightarrow L$ the inclusion map 
and $L_X$ the sublattice of $L$ generated by
$\iota[X]$. Each $f\in\Fun[m]{X,G}$ admits
a unique extension $g\in\Fun[m]{L_X,G}$. 
Moreover, if $T$ is the canonical representation 
of $f$ then $g$ takes the form
\begin{equation}
\label{ext}
g\Big(\bigvee_{1\le n\le N}\iota(x_n)\Big)
	=
T(\eqfs xN)
\qquad
x_1,\ldots,x_N\in X.
\end{equation}
\end{theorem}

\begin{proof}
Let $T$ be the canonical representation of $f$
and choose $\eqfs x N,\eqfs zK\in\Xi$ such that 
$
\bigvee_n\iota(x_n)
	=
\bigvee_k\iota(z_k)
$.
Then, 
$\bigvee_k\iota(z_k\wedge x_1)$
exists in $X$ and is equal to $x_1$. Given
that $f$ is modular,
\begin{equation}
f(x_1)
	=
T([z_1\wedge x_1,\ldots,z_K\wedge x_1])
	=
T(\eqfs zK)+T([x_1])-T([z_1,\ldots,z_K, x_1])
\end{equation}
and therefore
$T(\eqfs zK)
	=
T([z_1,\ldots,z_K, x_1])$.
Proceeding iteratively we establish
\begin{equation}
T(\eqfs zK)
=
T([z_1,\ldots,z_K, x_1,\ldots,x_N])
=
T(\eqfs xN).
\end{equation}
This sows that \eqref{ext} correctly defines an element
$g\in\Fun{L_X,G}$. It is easily show that $g$ inherits the 
modular property from $T$: putting 
$\bar x=\bigvee_{1\le n\le N}\iota(x_n)$ and
$\bar z=\bigvee_{1\le k\le K}\iota(z_k)$,
\begin{align*}
g(\bar x)
+
g(\bar z)
&=
T(\eqfs xN)+T(\eqfs zK)
\\&=
T([x_1\wedge z_1,\ldots,x_n\wedge z_k,\ldots,x_N\wedge z_K])
+
T([x_1,\ldots,x_N,z_1,\ldots,z_K])
\\&=
g(\bar x\wedge\bar z)
+
g(\bar x\vee\bar z).
\end{align*}
Uniqueness follows once more from the fact that 
$L_X$ is by definition generated by $X$.
\end{proof}

A more difficult question is whether, under the
conditions of Theorem \ref{th ext}, one may 
obtain a modular extension of $f$ to the whole 
of $L$. A positive answer is established in the 
case of a set function exploiting some classical 
results%
\footnote{
There are several results extending a group
valued set functions, among which one should
necessarily include \cite{lipecki_1983}. Nevertheless
these papers consider additional constraints
on the set function, such as positivity or 
countable additivity, and require additional 
structure on $G$, such as order completeness 
or compactness.
}
.

\begin{corollary}
\label{cor ext}
Let $\A$ be a $\cap$ closed family of subsets 
of a non empty set $\Omega$ with $\emp\in\A$
and $\Ring$ a ring containing $\A$. Any 
$F\in\Fun[m]{\A,G}$ with $F(\emp)=0$ admits
a modular extension to $\Ring$. If $\Ring$ 
is the smallest ring containing $\A$ then such 
extension is unique and, in case $G$ is partially
ordered, positive, if $F$ is monotonic.
\end{corollary}

\begin{proof}
Form the lattice $\A_1$ of all finite unions of
sets from $\A$. By Theorem \ref{th ext}, $F$ 
admits a unique modular extension $F_1$ to 
$\A_1$. Pettis then proved \cite[Theorem 1.2]{pettis} 
that a modular set function on a lattice of sets
which takes values in an Abelian group and vanishes 
on the empty set admits a unique, finitely additive 
extension to the generated ring, $\Ring_0$. The
extension from $\Ring_0$ to $\Ring$ follows
along well known arguments that we just adapt
to account for the fact that $F$ takes values in 
$G$ rather than $\R$

We notice first that each function $f\in\Fun{\Omega,G}$
of the form $f=\sum_{n=1}^Na_n\set{R_n}$ with
$R_1,\ldots,R_N\in\Ring_0$ and 
$a_1,\ldots,a_N\in\mathbb Z$ (in symbols 
$f\in\Sim(\Ring_0)$) may be written in the form
\begin{equation}
\label{disjoint}
f
	=
\sum_{\{\emp< b\le[N]:a_b\ne0\}}a_b\set{R_b}
\qtext{with}
R_b
	=
\bigcap_{n\in b}R_n\cap\bigcap_{j\notin b}R_j^c
\in
\Ring_0
\qtext{and}
a_b=\sum_{n\in b}a_n\in G
\end{equation}
and that
$\sum_{n=1}^Na_nF_1(R_n)
	=
\sum_{\emp< b\le[N]}a_bF_1(R_b)
$, by modularity. Thus, we are allowed to interpret
the quantity $\sum_{n=1}^Na_nF_1(R_n)$ as the
image $H(f)$ of $f$ under a function $H:\Sim(\Ring_0)\to G$.
If $A\in\Ring\setminus\Ring_0$, $b,b'\in\mathbb Z$
$f,f'\in\Sim(\Ring_0)$ and $f+b\set A=f'b'\set A$
then, upon writing $f'-f\in\Sim(\Ring_0)$ in the form
\eqref{disjoint},
$(b-b')\set A
	=
\sum_{\emp< b\le[N]}a_b\set{R_b}$. 
This implies $A=\bigcup_b R_b$,
a contradiction, unless $b=b'$ and $f=f'$. We can 
then define the extension 
$H_A:\Sim(\Ring_0\cup\{A\})\to G$ of $H$ by
letting $H_A(f+b\set A)=H(f)+bg_A$ for some
fixed $g_A\in G$. By transfinite induction%
\footnote{
See the details in \cite[3.2.5]{rao}; the original 
argument is due to Tarski.
}, 
we obtain an extension $\bar H$ of $H$ to
$\Sim(\Ring)$. It is then enough to set
$\bar F=\bar H(\set R)$ for all $R\in\Ring$
since 
$\bar F(R_1\cup R_2)
+
\bar F(R_1\cap R_2)
=
\bar H(\set{R_1\cup R_2}+\set{R_1\cup R_2})
=
\bar H(\set{R_1}+\set{R_2})
=
\bar F(R_1)+\bar F(R_2)$.
\end{proof}

Given that for a family of sets union may well serve
as a semilattice operation, Corollary \ref{cor ext}
may equally well be stated for a $\cup$ semilattice
of sets. It is also implicit in Corollary \ref{cor ext}
a finitely additive version of Dynkin's lemma:

\begin{corollary} 
\label{corollary Dynkin} 
Two finitely additive probabilities which agree on a 
semilattice of sets also agree on the generated algebra.
\end{corollary}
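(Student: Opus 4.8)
The plan is to read this off the uniqueness already established, recognizing that the restriction of a finitely additive probability to a semi-lattice is precisely the kind of object Theorem~\ref{th pettis} governs. The ``if'' direction needs no work: two finitely additive probabilities that agree on the generated algebra agree on every subclass, $\A_0$ included. For the converse, suppose $P$ and $Q$ are finitely additive probabilities defined on an algebra $\bar\A$ that contains $\A_0$ and is generated by it, with $\restr{P}{\A_0}=\restr{Q}{\A_0}$. By Lemma~\ref{lemma conjugate} I may assume $\A_0$ is a $\cap$-lattice.

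First I would check that $\restr{P}{\A_0}$ and $\restr{Q}{\A_0}$ are semi-modular. Any finitely additive real set function on an algebra obeys the inclusion-exclusion identity for arbitrary finite families drawn from that algebra, and that identity is exactly \eqref{modular cap}; restricting attention to $A_1,\ldots,A_N\in\A_0$ whose union happens to lie in $\A_0$, and noting that each $\bigcap_{n\in b}A_n$ stays in the $\cap$-lattice $\A_0$, semi-modularity follows at once. Since $P(\emp)=Q(\emp)=\0$, property \eqref{sa} holds, so that both restrictions in fact belong to the semi-additive class $sa_0(\A_0,\R)$.

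Having placed both restrictions in $sa_0(\A_0,\R)$ and knowing that they coincide there, I would invoke part (ii) of Theorem~\ref{th pettis}: a semi-additive set function on $\A_0$ extends additively to the generated algebra $\bar\A$, uniquely up to the value assigned at $X$. Because $P$ and $Q$ are probabilities, $P(X)=Q(X)=1$, so that one remaining degree of freedom is fixed identically for both, and the uniqueness delivers $P=Q$ throughout $\bar\A$.

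The whole argument is bookkeeping layered on top of the extension machinery, so I anticipate no genuine obstacle. The single point requiring care is the ``unique up to $\bar\Phi(X)$'' clause of Theorem~\ref{th pettis}: one must confirm that agreement on $\A_0$ leaves only the normalization at $X$ undetermined, and that $P(X)=Q(X)=1$ removes exactly that ambiguity rather than leaving additional freedom uncontrolled. This is precisely what part (ii) guarantees, so the normalization suffices and the conclusion is forced.
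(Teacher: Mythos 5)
Your proof is correct and takes exactly the route the paper intends: the paper states Corollary~\ref{corollary Dynkin} without proof as an immediate consequence of Theorem~\ref{th pettis}, and your argument fills in precisely the implicit details (restrictions of finitely additive probabilities to the $\cap$-lattice are semi-modular and satisfy \eqref{sa}, hence lie in $sa_0(\A_0,\R)$, after which the uniqueness-up-to-$\bar\Phi(X)$ clause of part (ii) plus $P(X)=Q(X)=1$ forces agreement on the generated algebra).
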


\section{The extension of modular functions}
\label{sec extension}

In this section we introduce sets of the form
\begin{equation}
\label{I}
I(x)
	=
\{z\wedge x:z\in X\}
\qquad
x\in X
\end{equation}
and let $\Ring_X$ be the ring generated by these sets.
Theorems \ref{th canonical} and \ref{th embed} imply
the following:

\begin{theorem}
\label{th Stiltijes}
The groups $\Fun{X,G}$ and $\Fun[m]{\Ring_X,G}$ 
correspond isomorphically via the identity
\begin{equation}
\label{Stiltijes}
\mu(I(x))
	=
f(x)
\qquad
x\in X.
\end{equation}
Moreover, if $\mu\in\Fun[m]{\Ring_X,G}$ and 
$f\in\Fun{X,G}$ satisfy \eqref{Stiltijes}, then
(i)
$f\in\Fun[m]{X,G}$ if and only if 
\begin{equation}
\label{Stiltijes modular}
\mu\Big(\bigcup_{x\in a}I(x)\Big)
	=
\mu\big(I(\vee a)\big)
\qquad
a\in\mathcal X,\ 
\vee a\in X,
\end{equation}
(ii) if $G$ is partially ordered, then $\mu$ is positive
if and only if $f$ satisfies \eqref{monotonic}.
\end{theorem}

\begin{proof}
$I$ embeds $X$ into $2^X$. Denote by
$L$ the sublattice of $2^X$ generated by $I[X]$. By 
Theorem \ref{th embed} there exists 
$\iota\in\Hom\Xi L$ such that
\begin{equation}
\iota(\eqfs xN)
	=
\bigcup_{1\le n\le N}I(x_n)
\qquad
\eqfs xN\in\Xi.
\end{equation}
In the present case $\iota$ is a lattice
isomorphism of $\Xi$ and $L$. In fact,
$\bigcup_{1\le n\le N}I(x_n)
	\le
\bigcup_{1\le k\le K}I(z_k)$
implies that for each $1\le n\le N$ there exists
$1\le k\le K$ such that $x_n\in I(z_k)$ i.e.
$x_n\le z_k\le x_n$ so that $\eqfs xN\le\eqfs zK$. 
Thus, letting $T$ denote the canonical representation 
of $f$, we obtain
\begin{equation}
\label{mu}
\mu_0
	=
T\circ\iota^{-1}
\in\Fun[m]{L,G}.
\end{equation}
Then $\mu_0$ satisfies \eqref{Stiltijes} so
that to each $f\in\Fun{X,G}$ corresponds an element 
$\mu_0\in\Fun[m]{L,G}$. Moreover, by modularity
such $f\to\mu_0$ is unique. We further deduce 
from Corollary \ref{cor ext} that associated with $f$ 
there is a unique $\mu\in\Fun[m]{\Ring,G}$, i.e. that 
there exists an injective map $f\to\mu$ of $\Fun{X,G}$
into $\Fun[m]{\Ring,G}$. That this map is onto is obvious.

If $\bigvee_{1\le n\le N}x_n$ exists in $X$,
then
\begin{align*}
\mu\Big(\bigcup_{1\le n\le N}I(x_n)\Big)
=
T(\eqfs xN)
\end{align*}
so that \eqref{Stiltijes modular} is clearly equivalent
to Condition \tiref{i} of Theorem \ref{th canonical},
i.e. to $f$ being modular.
\end{proof}

Of course, if $X$ admits a greatest element, then
$X\in\Ring$ and $\Ring$ is an algebra. We refer
to $\mu$ satisfying \eqref{Stiltijes} as the
distribution of $f$. We stress that this construction
does not impose any restriction on $f$ nor on $G$.
As a drawback, we cannot reach any conclusion
concerning boundedness or countable additivity.

Nevertheless the construction of distributions and
the relative ease in extending set functions to
larger domains suggests the possibility that
a function on a semilattice and values in a general
groups my be extended to a larger domain than
the lattice generated by $X$. Under some conditions
concerning the relation between the original domain
and its extension.

\begin{theorem}
\label{th extension}
Let $X$ be a $\wedge$ closed subset of a lattice 
$L$ and $\iota:X\to L$ the inclusion map. 
Each $f\in\Fun{X,G}$ admits an extension $g\in\Fun{L,G}$. 
Moreover, $g$ is modular if so is $f$ and if 
\begin{equation}
\label{dense}
(\forall a\subset L,\text{ finite})
(\forall y\in a)
(\forall x\in X)
(\exists x_y^a\in X):
x_y^a\le x\wedge y
\qtext{and}
\bigvee_{y\in a}y\wedge\iota(x)
	=
\bigvee_{y\in a}\iota(x^a_y).
\end{equation}
\end{theorem}

\begin{proof}
Let $\mu\in\Fun[m]{\Ring_X,G}$ be the distribution
of $f$, define the map $J\in\Fun{X,2^L}$ by letting
\begin{equation}
\label{J}
J(x)
	=
\{y\in L:y\le x\}
\qquad
x\in X
\end{equation}
and denote by $\mathscr T_0$ the ring generated by
such sets $J(x)$. Clearly, $I(x)=J(x)\cap X$ (see \eqref{I}) 
and $\Ring_X=\mathscr T_0\cap X$. Define 
$\nu_0\in\Fun[m]{\mathscr T_0,G}$
upon letting
\begin{equation}
\nu_0(B)
	=
\mu(B\cap X)
\qquad
B\in\mathscr T_0.
\end{equation}
For each finite sequence $a_1,\ldots,a_N$ of finite 
subsets of $L$ and each pair $x,x'\in X$,
consider the inclusion
\begin{equation}
\label{incl}
J(x)\setminus J(x')
	\subset 
\bigcup_{n=1}^NJ(\vee a_n)\setminus\bigcup_{y\in a_n}J(y).
\end{equation}
We claim that \eqref{incl} implies 
$\nu_0(J(x)\setminus J(x'))
	\le
0$.
The proof is by induction on $N$. If $N=1$ and 
$y\in a_1$, \eqref{incl} implies 
$J(x\wedge y)\setminus J(x')=\emp$, that is 
$\iota(x)\wedge y\le x'$ for all $y\in a_1$.
Thus, 
$x'
\ge
\vee a_1\wedge\iota(x)
=
x$, so the claim is verified. 
Assuming its validity up to $N-1$, fix
$y\in a_N$ and $x^{a_N}_y$ as in \eqref{dense}. 
Then $J(x^{a_N}_y)\setminus J(x')$ is covered
by the union of $N-1$ elements in \eqref{incl}
so that 
$\nu_0(J(x^{a_N}_y)\setminus J(x'))
	=
0$
by the inductions step. Therefore,
\begin{align*}
\nu_0(J(x)\setminus J(x'))
&=
\nu_0(J(x)\setminus J(x')\cup J(x^{a_N}_y))
\\&=
\mu(I(x)\setminus I(x')\cup I(x^{a_N}_y))
\\&=
\mu\Big(I(x)\setminus I(x')
\cup
\bigcup_{1\le n\le N}\bigcup_{y\in a_N} I(x^{a_N}_y)\Big)
\\&=
\mu\big(I(x)\setminus I(x')\cup I(x)\big)
\\&=
0.
\end{align*}
This follows from  the assumption that
$\bigvee_{\substack{1\le n\le N\\y\in a_N}} \iota(x^{a_N}_y)
=
x$
and that $\mu$ satisfies \eqref{Stiltijes modular}.

Thus \cite[Proposition 1]{lipecki_1984} applies and
delivers the existence of an extension $\nu_1$ of 
$\nu_0$ from $\mathscr T_0$ to $\mathscr T_1$, 
the ring generated by $\mathscr T_0$ and the family 
$J(y)$ with $y\in L$, that satisfies the additional 
property  
\begin{equation}
\nu\Big(J(\vee a)\setminus\bigcup_{y\in a}J(y)\Big)
=
0
\qquad
a\subset L,\ a\text{ finite}.
\end{equation}
It is then enough to let 
$
g(y)=\nu(J(y))
$ 
for every $y\in L$ and modularity follows from 
\eqref{Stiltijes modular}.
\end{proof}

A clear case in which condition \eqref{dense} holds is
when $L$ is generated by $X$, the case dealt with in
Theorem \ref{th ext}. The following is another
quite general situation.

\begin{corollary}
\label{cor extension}
Let $X$ be a subset of a lattice $L$ with the property
that $x\wedge y\in X$ for all $x\in X$ and $y\in L$.
Then, every $f\in\Fun[m]{X,G}$ admits an extension 
$g\in\Fun[m]{L,G}$. 
\end{corollary}

Corollary \ref{cor extension} applies, for example,
to the case in which $X$ consists of the subsets
of a topological space which have compact closure.
Another easy example is constructed when $\Omega$ 
is some arbitrary topological space, the space 
$\Fun\Omega$ is endowed with pointwise minimum
and $X$ consists of functions with compact support.
It is equally easy to see that in both these examples
property \eqref{groemer} applies so that the definition
of modularity takes a conveniently simple form.

\bibliographystyle{acm}
\bibliography{/MathBib}

\end{document}